\let\origsection=\section \def\section{\@ifstar{\origsection*}{\mysection}}
\def\mysection{\@startsection{section}{1}\z@{.7\linespacing\@plus\linespacing}{.5\linespacing}{\normalfont\scshape\centering\S}}
\renewcommand{\PrintDOI}[1]{\doi{#1}}
\numberwithin{equation}{section}
\numberwithin{figure}{section}
\let\polishlcross=\l
\def\l{\ifmmode\ell\else\polishlcross\fi}
\let\emptyset=\varnothing
\let\setminus=\smallsetminus
\def\moverlay{\mathpalette\mov@rlay}
\def\mov@rlay#1#2{\leavevmode\vtop{   \baselineskip\z@skip \lineskiplimit-\maxdimen
		\ialign{\hfil$\m@th#1##$\hfil\cr#2\crcr}}}
\newcommand{\charfusion}[3][\mathord]{
	#1{\ifx#1\mathop\vphantom{#2}\fi
		\mathpalette\mov@rlay{#2\cr#3}
	}
	\ifx#1\mathop\expandafter\displaylimits\fi}
\DeclareFontFamily{U}  {MnSymbolC}{}
\DeclareSymbolFont{MnSyC}         {U}  {MnSymbolC}{m}{n}
\DeclareFontShape{U}{MnSymbolC}{m}{n}{
	<-6>  MnSymbolC5
	<6-7>  MnSymbolC6
	<7-8>  MnSymbolC7
	<8-9>  MnSymbolC8
	<9-10> MnSymbolC9
	<10-12> MnSymbolC10
	<12->   MnSymbolC12}{}
\DeclareMathSymbol{\powerset}{\mathord}{MnSyC}{180}
\newcommand{\pedge}[9]{
	
	\ifx\relax#6\relax
	\def\qoffs{0pt}
	\else
	\def\qoffs{#6}
	\fi
	
	\def\phedge{
		($#1+#5!\qoffs!-90:#2-#5$) -- 
		($#2+#1!\qoffs!-90:#3-#1$) -- 
		($#3+#2!\qoffs!-90:#4-#2$) -- 
		($#4+#3!\qoffs!-90:#5-#3$) -- 
		($#5+#4!\qoffs!-90:#1-#4$) -- cycle}

	\coordinate (12) at ($#1!\qoffs!90:#2$);
	\coordinate (15) at ($#1!\qoffs!-90:#5$);
	\coordinate (23) at ($#2!\qoffs!90:#3$);
	\coordinate (21) at ($#2!\qoffs!-90:#1$);
	\coordinate (34) at ($#3!\qoffs!90:#4$);
	\coordinate (32) at ($#3!\qoffs!-90:#2$);
	\coordinate (45) at ($#4!\qoffs!90:#5$);
	\coordinate (43) at ($#4!\qoffs!-90:#3$);
	\coordinate (51) at ($#5!\qoffs!90:#1$);
	\coordinate (54) at ($#5!\qoffs!-90:#4$);

	\def\nphedge{
		(15) let \p1=($(15)-#1$), \p2=($(12)-#1$) in 
		arc[start angle={atan2(\y1,\x1)}, delta angle={atan2(\y2,\x2)-atan2(\y1,\x1)-360*(atan2(\y2,\x2)-atan2(\y1,\x1)>0)}, x radius=\qoffs, y radius=\qoffs] --
		(21) let \p1=($(21)-#2$), \p2=($(23)-#2$) in 
		arc[start angle={atan2(\y1,\x1)}, delta angle={atan2(\y2,\x2)-atan2(\y1,\x1)-360*(atan2(\y2,\x2)-atan2(\y1,\x1)>0)}, x radius=\qoffs, y radius=\qoffs] --
		(32) let \p1=($(32)-#3$), \p2=($(34)-#3$) in 
		arc[start angle={atan2(\y1,\x1)}, delta angle={atan2(\y2,\x2)-atan2(\y1,\x1)-360*(atan2(\y2,\x2)-atan2(\y1,\x1)>0)}, x radius=\qoffs, y radius=\qoffs] --
		(43) let \p1=($(43)-#4$), \p2=($(45)-#4$) in 
		arc[start angle={atan2(\y1,\x1)}, delta angle={atan2(\y2,\x2)-atan2(\y1,\x1)-360*(atan2(\y2,\x2)-atan2(\y1,\x1)>0)}, x radius=\qoffs, y radius=\qoffs] --
		(54) let \p1=($(54)-#5$), \p2=($(51)-#5$) in 
		arc[start angle={atan2(\y1,\x1)}, delta angle={atan2(\y2,\x2)-atan2(\y1,\x1)-360*(atan2(\y2,\x2)-atan2(\y1,\x1)>0)}, x radius=\qoffs, y radius=\qoffs] --
		cycle}

	\ifx\relax#7\relax
	\def\plwidth{1pt}
	\else
	\def\plwidth{#7}
	\fi
	
	\ifx\relax#9\relax
	\fill \nphedge;
	\else
	\fill[#9]\nphedge;
	\fi
	
	\ifx\relax#8\relax
	\draw[line width=\plwidth,rounded corners=\qoffs]\nphedge;
	\else
	\draw[line width=\plwidth,#8]\nphedge;
	\fi
}
\newcommand{\qedge}[7]{
	
	\ifx\relax#4\relax
	\def\qoffs{0pt}
	\else
	\def\qoffs{#4}
	\fi
	
	\def\qhedge{
		($#1+#3!\qoffs!-90:#2-#3$) --
		($#2+#1!\qoffs!-90:#3-#1$) --
		($#3+#2!\qoffs!-90:#1-#2$) -- cycle}

	\coordinate (12) at ($#1!\qoffs!90:#2$);
	\coordinate (13) at ($#1!\qoffs!-90:#3$);
	\coordinate (23) at ($#2!\qoffs!90:#3$);
	\coordinate (21) at ($#2!\qoffs!-90:#1$);
	\coordinate (31) at ($#3!\qoffs!90:#1$);
	\coordinate (32) at ($#3!\qoffs!-90:#2$);
	
	\def\nqhedge{
		(13) let \p1=($(13)-#1$), \p2=($(12)-#1$) in
		arc[start angle={atan2(\y1,\x1)}, delta angle={atan2(\y2,\x2)-atan2(\y1,\x1)-360*(atan2(\y2,\x2)-atan2(\y1,\x1)>0)}, x radius=\qoffs, y radius=\qoffs] --
		(21) let \p1=($(21)-#2$), \p2=($(23)-#2$) in
		arc[start angle={atan2(\y1,\x1)}, delta angle={atan2(\y2,\x2)-atan2(\y1,\x1)-360*(atan2(\y2,\x2)-atan2(\y1,\x1)>0)}, x radius=\qoffs, y radius=\qoffs] --
		(32) let \p1=($(32)-#3$), \p2=($(31)-#3$) in
		arc[start angle={atan2(\y1,\x1)}, delta angle={atan2(\y2,\x2)-atan2(\y1,\x1)-360*(atan2(\y2,\x2)-atan2(\y1,\x1)>0)}, x radius=\qoffs, y radius=\qoffs] --
		cycle}
	
	\ifx\relax#5\relax
	\def\qlwidth{1pt}
	\else
	\def\qlwidth{#5}
	\fi
	
	\ifx\relax#7\relax
	\fill \nqhedge;
	\else
	\fill[#7]\nqhedge;
	\fi
	
	\ifx\relax#6\relax
	\draw[line width=\qlwidth,rounded corners=\qoffs]\nqhedge;
	\else
	\draw[line width=\qlwidth,#6]\nqhedge;
	\fi
}
\newcommand{\redge}[8]{
	
	\ifx\relax#5\relax
	\def\qoffs{0pt}
	\else
	\def\qoffs{#5}
	\fi
	
	\def\rhedge{
		($#1+#4!\qoffs!-90:#2-#4$) -- 
		($#2+#1!\qoffs!-90:#3-#1$) -- 
		($#3+#2!\qoffs!-90:#4-#2$) -- 
		($#4+#3!\qoffs!-90:#1-#3$) -- cycle}

	\coordinate (12) at ($#1!\qoffs!90:#2$);
	\coordinate (14) at ($#1!\qoffs!-90:#4$);
	\coordinate (23) at ($#2!\qoffs!90:#3$);
	\coordinate (21) at ($#2!\qoffs!-90:#1$);
	\coordinate (34) at ($#3!\qoffs!90:#4$);
	\coordinate (32) at ($#3!\qoffs!-90:#2$);
	\coordinate (41) at ($#4!\qoffs!90:#1$);
	\coordinate (43) at ($#4!\qoffs!-90:#3$);
	
	\def\nrhedge{
		(14) let \p1=($(14)-#1$), \p2=($(12)-#1$) in 
		arc[start angle={atan2(\y1,\x1)}, delta angle={atan2(\y2,\x2)-atan2(\y1,\x1)-360*(atan2(\y2,\x2)-atan2(\y1,\x1)>0)}, x radius=\qoffs, y radius=\qoffs] --
		(21) let \p1=($(21)-#2$), \p2=($(23)-#2$) in 
		arc[start angle={atan2(\y1,\x1)}, delta angle={atan2(\y2,\x2)-atan2(\y1,\x1)-360*(atan2(\y2,\x2)-atan2(\y1,\x1)>0)}, x radius=\qoffs, y radius=\qoffs] --
		(32) let \p1=($(32)-#3$), \p2=($(34)-#3$) in 
		arc[start angle={atan2(\y1,\x1)}, delta angle={atan2(\y2,\x2)-atan2(\y1,\x1)-360*(atan2(\y2,\x2)-atan2(\y1,\x1)>0)}, x radius=\qoffs, y radius=\qoffs] --
		(43) let \p1=($(43)-#4$), \p2=($(41)-#4$) in 
		arc[start angle={atan2(\y1,\x1)}, delta angle={atan2(\y2,\x2)-atan2(\y1,\x1)-360*(atan2(\y2,\x2)-atan2(\y1,\x1)>0)}, x radius=\qoffs, y radius=\qoffs] --
		cycle}
	
	\ifx\relax#6\relax
	\def\rlwidth{1pt}
	\else
	\def\rlwidth{#6}
	\fi
	
	\ifx\relax#8\relax
	\fill \nrhedge;
	\else
	\fill[#8]\nrhedge;
	\fi
	
	\ifx\relax#7\relax
	\draw[line width=\rlwidth,rounded corners=\qoffs]\nrhedge;
	\else
	\draw[line width=\rlwidth,#7]\nrhedge;
	\fi
}
\let\epsilon=\varepsilon
\let\eps=\epsilon
\let\rho=\varrho
\let\theta=\vartheta
\newtheoremstyle{note}  {4pt}  {4pt}  {\sl}  {}  {\bfseries}  {.}  {.5em}          {}
\newtheoremstyle{introthms}  {3pt}  {3pt}  {\itshape}  {}  {\bfseries}  {.}  {.5em}          {\thmnote{#3}}
\newtheoremstyle{remark}  {2pt}  {2pt}  {\rm}  {}  {\bfseries}  {.}  {.3em}          {}
\theoremstyle{plain}
\newtheorem{theorem}{Theorem}[section]
\newtheorem{prop}[theorem]{Proposition}
\newtheorem{conj}[theorem]{Conjecture}
\theoremstyle{note}
\theoremstyle{remark}
\newtheorem{question}[theorem]{Question}
\newcommand*\patchAmsMathEnvironmentForLineno[1]{
	\expandafter\let\csname old#1\expandafter\endcsname\csname #1\endcsname
	\expandafter\let\csname oldend#1\expandafter\endcsname\csname end#1\endcsname
	\renewenvironment{#1}
	{\linenomath\csname old#1\endcsname}
	{\csname oldend#1\endcsname\endlinenomath}}
\newcommand*\patchBothAmsMathEnvironmentsForLineno[1]{
	\patchAmsMathEnvironmentForLineno{#1}
	\patchAmsMathEnvironmentForLineno{#1*}}
\def\ex{\text{\rm ex}}
\newcommand{\overrighharpoonup}[1]{\ThisStyle{%
		\vbox {\m@th\ialign{##\crcr
				\rightharpoonupfill \crcr
				\noalign{\kern-\p@\nointerlineskip}
				$\hfil\SavedStyle#1\hfil$\crcr}}}}
\def\rightharpoonupfill{%
	$\SavedStyle\m@th\mkern+0.8mu\cleaders\hbox{$\shortbar\mkern-4mu$}\hfill\rightharpoonuptip\mkern+0.8mu$}
\def\rightharpoonuptip{%
	\raisebox{\z@}[2pt][1pt]{\scalebox{0.55}{$\SavedStyle\rightharpoonup$}}}
\def\shortbar{%
	\smash{\scalebox{0.55}{$\SavedStyle\relbar$}}}
\newcommand{\overlefharpoonup}[1]{\ThisStyle{%
		\vbox {\m@th\ialign{##\crcr
				\leftharpoonupfill \crcr
				\noalign{\kern-\p@\nointerlineskip}
				$\hfil\SavedStyle#1\hfil$\crcr}}}}
\def\leftharpoonupfill{%
	$\SavedStyle\m@th\mkern+0.8mu\cleaders\hbox{$\shortbar\mkern-4mu$}\hfill\leftharpoonuptip\mkern+0.8mu$}
\def\leftharpoonuptip{%
	\raisebox{\z@}[2pt][1pt]{\scalebox{0.55}{$\SavedStyle\leftharpoonup$}}}
\newsavebox\myboxA
\newsavebox\myboxB
\newlength\mylenA
\newcommand*\xoverline[2][0.75]{%
	\sbox{\myboxA}{$\m@th#2$}%
	\setbox\myboxB\null
	\ht\myboxB=\ht\myboxA%
	\dp\myboxB=\dp\myboxA%
	\wd\myboxB=#1\wd\myboxA
	\sbox\myboxB{$\m@th\overline{\copy\myboxB}$}
	\setlength\mylenA{\the\wd\myboxA}
	\addtolength\mylenA{-\the\wd\myboxB}%
	\ifdim\wd\myboxB<\wd\myboxA%
	\rlap{\hskip 0.5\mylenA\usebox\myboxB}{\usebox\myboxA}%
	\else
	\hskip -0.5\mylenA\rlap{\usebox\myboxA}{\hskip 0.5\mylenA\usebox\myboxB}%
	\fi}
\begin{document}
	
	\title[The codegree Tur\'an density of $C_\ell^{-}$]
	{The codegree Tur\'an density of tight cycles minus one edge}
	
	\author[S. Piga]{Sim\'on Piga}
	\address{Mathematics Department, University of Birmingham, UK}
	\email{piga@birmingham.uk}
	
	\author[M.~Sales]{Marcelo Sales}
	\address{Mathematics Department, Emory University, USA}
	\email{mtsales@emory.edu}
	
	\author[B.~Sch\"ulke]{Bjarne Sch\"ulke}
	\address{Mathematics Department, California Institute of Technology, USA}
	\email{schuelke@caltech.edu}
	
	\thanks{
		S. Piga, is supported by EPSRC grant EP/V002279/1. The second author is partially suported by NSF grant DMS 1764385.
		There are no additional data beyond that contained within the main manuscript}
	
	\subjclass[2020]{Primary: 05D99. Secondary: 05C65}
	\keywords{Codegree density, Hypergraphs}
	
	\begin{abstract}
		Given~$\alpha>0$ and an integer~$\ell\geq5$, we prove that every sufficiently large $3$-uniform hypergraph~$H$ on $n$ vertices in which every two vertices are contained in at least~$\alpha n$ edges contains a copy of~$C_\ell^{-}$, a tight cycle on $\ell$ vertices minus one edge.
		This improves a previous result by Balogh, Clemen, and Lidick\'y.
		
	\end{abstract}
	
	\maketitle
	
	\section{Introduction}
	A $k$-uniform hypergraph $H$ consists of a vertex set~$V(H)$ together with a set of edges~$E(H)\subseteq V(H)^{(k)}=\{S\subseteq V(H):\vert S\vert =k\}$.
	Throughout this note, if not stated otherwise, by \emph{hypergraph} we always mean a $3$-uniform hypergraph.
	Given a hypergraph~$F$, the extremal number of~$F$ for~$n$ vertices, $\ex(n,F)$, is the maximum number of edges an~$n$-vertex hypergraph can have without containing a copy of~$F$. 
	Determining the value of~$\ex(n,F)$, or the Tur\'an density $\pi(F) = \lim_{n \to \infty} \frac{\ex(n,F)}{\binom{n}{3}}$, is one of the core problems in combinatorics. 
	In particular, the problem of determining the Tur\'an density of the complete~$3$-uniform hypergraph on four vertices, i.e.,~$\pi(K_4^{(3)})$, was asked by Tur\'an in 1941~\cite{T:41} and Erd\H{o}s~\cite{E:77} offered 1000\$ for its resolution.
	Despite receiving a lot of attention (see for instance the survey by Keevash~\cite{K:11}) this problem, and even the seemingly simpler problem of determining~$\pi(K_4^{(3)-})$, where~$K_4^{(3)-}$ is the~$K_4^{(3)}$ minus one edge, remain open.
	
	Several variations of this type of problem have been considered, see for instance~\cites{Balogh,GKV:16,RRS:18} and the references therein.
	The one that we are concerned with in this note asks how large the minimum codegree of an~$F$-free hypergraph can be.
	Given a hypergraph~$H$ and~$S\subseteq V$ we define the degree~$d(S)$ of~$S$ (in~$H$) as the number of edges containing~$S$, i.e.,~$d(S)=\vert\{e\in E(H):S\subseteq e\}\vert$.
	If~$S=\{v\}$ or~$S=\{u,v\}$ (and~$H$ is~$3$-uniform), we omit the parentheses and speak of~$d(v)$ or~$d(uv)$ as the degree of~$v$ or codegree of~$u$ and~$v$, respectively.
	We further write~$\delta(H)=\delta_1(H)=\min_{v\in V(H)} d(v)$ and~$\delta_2(H)=\min_{uv\in V(H)^{(2)}}d(uv)$ for the minimum degree and the minimum codegree of~$H$, respectively.
	
	Given a hypergraph~$F$ and~$n\in\mathds{N}$, Mubayi and Zhao \cite{MZ} introduced the \emph{codegree Tur\'an number}~$\ex_2(n,F)$ of~$n$ and~$F$ as the maximum~$d$ such that there is an~$F$-free hypergraph~$H$ on~$n$ vertices with~$\delta_2(H)\geq d$. 
	Moreover, they defined the \emph{codegree Tur\'an density of $F$} as
	$$\gamma(F) := \lim_{n\to\infty} \frac{ex_2(n,F)}{n}\,$$
	and proved that this limit always exists. 
	It is not hard to see that
	$$\gamma(F) \leq \pi(F)\,.$$

	The codegree Tur\'an density is known only for a few (non-trivial) hypergraphs (and blow-ups of these), see the table in~\cite{Balogh}.
	The first result that determined~$\gamma(F)$ exactly is due to Mubayi \cite{M:05} who showed that~$\gamma(\mathbb F) = 1/2$, where $\mathbb F$ denotes the `Fano plane'.
	Later, using a computer assisted proof, Falgas-Ravry, Pikhurko, Vaughan, and Volec \cite{codegK4-} proved that~$\gamma(K_4^{(3)-})=1/4$.
	As far as we know, the only other hypergraph for which the codegree Tur\'an density is known is~$F_{3,2}$, a hypergraph with vertex set~$[5]$ and edges~$123$,~$124$,~$125$, and~$345$.
	The problem of determining the codegree Tur\'an density of~$K_4^{(3)}$ remains open, and Czygrinow and Nagle~\cite{CN:01} conjectured that~$\gamma(K_4^{(3)})=1/2$.
	For more results concerning $\pi(F)$,~$\gamma(F)$, and other variations of the Tur\'an density see~\cite{Balogh}. 
	
	Given an integer~$\ell\geq 3$, a \emph{tight cycle~$C_\ell$} is a hypergraph with vertex set~$\{v_1, \dots, v_\ell\}$ and edge set~$\{v_iv_{i+1}v_{i+2}:i\in\mathds{Z}/\ell\mathds{Z}\}$.
	Moreover, we define~$C_\ell^{-}$ as~$C_\ell^{}$ minus one edge. 
	In this note we prove that the Tur\'an codegree density of~$C_\ell^{-}$ is zero for every~$\ell\geq 5$. 
	
	\begin{theorem}\label{thm:cycles}
		Let~$\ell\geq 5$ be an integer.
		Then~$\gamma(C_{\ell}^{-})=0$.
	\end{theorem}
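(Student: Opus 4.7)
The plan is to build a copy of $C_\ell^-$ by first finding the ``body'' — a tight path on $\ell-1$ vertices — and then attaching a pendant vertex that provides the remaining two edges. I view $C_\ell^-$ as a tight path on $\ell-1$ vertices $v_2 v_3 \ldots v_\ell$ (with edges $v_i v_{i+1} v_{i+2}$ for $2 \le i \le \ell-2$) together with an additional vertex $v_1 \notin \{v_2, \ldots, v_\ell\}$ satisfying $v_1 v_2 v_3, v_1 v_{\ell-1} v_\ell \in E(H)$. Writing $N(xy) := \{w : wxy \in E(H)\}$ for the codegree neighborhood of a pair $\{x, y\}$, the requirement is $v_1 \in N(v_2, v_3) \cap N(v_{\ell-1}, v_\ell)$.

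The codegree assumption $\delta_2(H) \ge \alpha n$ provides abundant flexibility for the tight path itself: a greedy construction yields at least $(1-o(1))\alpha^{\ell-3} n^{\ell-1}$ ordered tight paths on $\ell-1$ distinct vertices in $H$, since after picking $(v_2, v_3)$ in $n(n-1)$ ways, each subsequent vertex admits at least $\alpha n - O(1)$ codegree-extensions. Moreover, for each vertex $w \in V(H)$, the \emph{link graph} $L_w$ (on $V(H) \setminus \{w\}$ with edges $\{xy : wxy \in E(H)\}$) has at least $\alpha \binom{n-1}{2}$ edges, i.e.\ density at least $\alpha$.

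The heart of the proof is to show that for some such tight path $P = v_2 v_3 \ldots v_\ell$, the intersection $N(v_2, v_3) \cap N(v_{\ell-1}, v_\ell)$ contains a vertex outside $\{v_2, \ldots, v_\ell\}$. For this I would use the double-counting identity
\[
\sum_P \bigl|N(v_2, v_3) \cap N(v_{\ell-1}, v_\ell)\bigr| \;=\; \sum_{w \in V(H)} \#\bigl\{P : v_2 v_3 \in E(L_w) \text{ and } v_{\ell-1} v_\ell \in E(L_w)\bigr\},
\]
where the sum on the left ranges over all ordered tight paths $P$ on $\ell-1$ distinct vertices, and aim to lower-bound the right-hand side by $\beta(\alpha,\ell) \cdot n^\ell$. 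For each fixed $w$, one starts with an ordered edge of $L_w$ as the initial pair (at least $\alpha n(n-1)$ options) and extends greedily by $\ell-3$ further vertices; the remaining task is to show that a positive fraction of the resulting tight paths terminate in an edge of $L_w$ as well.

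The main obstacle is exactly this last step: forcing the terminal pair of a greedy tight-path extension to land in $E(L_w)$. Since $E(L_w)$ has density only $\alpha$ among all pairs, one must show that the terminal pair is distributed with positive density over pair-space, not concentrated on a small subset. For $\ell-3$ large (compared with $1/\alpha$) this follows from a mixing-type argument in the auxiliary digraph on ordered pairs of $V(H)$ whose arcs encode hyperedges of $H$, in which every vertex has in- and out-degree at least $\alpha n$. The short case $\ell = 5$, where only two tight-path extension steps are available, is the most delicate: it requires a refined structural analysis of the link graphs $L_w$ together with the codegree condition, so as to rule out the degenerate scenario in which intersections of two codegree-neighborhoods $N(x,y) \cap N(x',y')$ are empty along all candidate extensions.
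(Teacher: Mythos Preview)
Your decomposition of $C_\ell^-$ as a tight path plus a pendant vertex is correct, and the double-counting identity is fine. The gap is precisely the step you yourself flag as ``the main obstacle'': you have not shown that a positive proportion of greedily-built tight paths starting in $E(L_w)$ also terminate in $E(L_w)$. Your proposed fix --- a mixing argument valid when $\ell-3$ is large compared with $1/\alpha$ --- misreads the quantifier structure of the theorem. Here $\ell$ is \emph{fixed} and one must establish $\gamma(C_\ell^-)=0$, i.e.\ the conclusion must hold for every $\alpha>0$, however small. Thus for every fixed $\ell$ the relevant regime is $\alpha\to 0$, where $\ell-3$ is certainly not large relative to $1/\alpha$. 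So the mixing heuristic, as stated, proves nothing for any single $\ell$; at best it would show something like $\gamma(C_\ell^-)\to 0$ as $\ell\to\infty$, which is not the theorem. And for $\ell=5$ you explicitly defer to an unspecified ``refined structural analysis'', which is where the entire content lies.

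The paper's argument is quite different and avoids mixing altogether. It first reduces every $\ell\ge 5$ to the single case $\ell=5$ via the blow-up invariance $\gamma(F)=\gamma(F(t))$ (Proposition~\ref{prop:blow-up}), observing that $C_6^-\subseteq C_3(2)$, $C_7^-\subseteq C_5^-(2)$, and $C_\ell^-\subseteq C_{\ell-3}^-(2)$ for $\ell\ge 8$. For $\ell=5$ it builds, inside a single link graph $L_v$, a ``nice picture'' $(v,S,b,P)$: a hub $b$, a set $S\subseteq N_{L_v}(b)$, and a set $P$ of ordered pairs $(x,y)$ such that $ubxy$ is a path in $L_v$ for every $u\in S$, with $|P|\ge \varepsilon^2 n^2/5$. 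It then iterates: pick $v_{k}\in S_{k-1}$, build a new nice picture $(v_k,S_k,b_k,P_k)$ with $S_k\subseteq S_{k-1}$. After $t>5/\varepsilon^2$ steps, pigeonhole on the dense sets $P_i\subseteq V^2$ forces $P_i\cap P_j\ne\emptyset$ for some $i<j$; any common pair $(x,y)$ then yields a $C_5^-$ on $v_j\,b_i\,v_i\,x\,y$. The key point is that the number of iterations depends only on $\varepsilon$, not on $\ell$, which is exactly what your approach lacks.
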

	The previously known best upper bound was given by Balogh, Clemen, and Lidick\'y \cite{Balogh} who used flag algebras to prove that~$\gamma(C_{\ell}^-)\leq 0.136$. 
	
	\section{Proof of Theorem~\ref{thm:cycles}}

	For singletons, pairs, and triples we may omit the set parentheses and commas.
	For a hypergraph~$H=(V,E)$ and~$v\in V$, the \emph{link of~$v$} (in~$H$) is the graph~$L_v=(V\setminus v,\{e\setminus v:v\in e\in E\})$.
	For~$x,y\in V$, the neighbourhood of~$x$ and~$y$ (in~$H$) is the set~$N(xy)=\{z\in V:xyz\in E\}$.
	For positive integers~$\ell, k$ and a hypergraph $F$ on $k$ vertices, denote the \emph{$\ell$-blow-up of~$F$} by $F(\ell)$.
	This is the $k$-partite hypergraph $F(\ell)=(V, E)$ with $V = V_1 \dot\cup \dots \dot\cup V_k$, $|V_i| = \ell$ for $1\leq i \leq k$, and $E = \{v_{i_1}v_{i_2} v_{i_3}: v_{i_j} \in V_{i_j} \text{ and } i_1 i_2 i_3 \in E(F)\}$.

	In their seminal paper, Mubayi and Zhao \cite{MZ} proved the following supersaturation result for the codegree Tur\'an density.
	
	\begin{prop}[Mubayi and Zhao \cite{MZ}]\label{prop:blow-up}
		For every hypergraph~$F$ and~$\varepsilon>0$, there are~$n_0$ and~$\delta>0$ such that every hypergraph~$H$ on~$n\geq n_0$ vertices with~$\delta_2(H)\geq (\gamma(F)+\varepsilon)n$ contains at least~$\delta n^{v(F)}$ copies of~$F$.
		Consequently, for every positive integer~$\ell$, $\gamma (F) = \gamma(F(\ell))$.\qed 
	\end{prop}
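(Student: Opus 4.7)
The plan is to prove the two assertions in sequence: first the supersaturation claim by a sampling/averaging argument in the spirit of Erd\H{o}s--Simonovits supersaturation, and then to deduce the blow-up identity $\gamma(F)=\gamma(F(\ell))$ by combining supersaturation with the classical Erd\H{o}s theorem on complete $k$-partite sub-hypergraphs (the box lemma).

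For the supersaturation step, fix $F$, $\eps>0$, and write $k=v(F)$. First I would choose a large constant $m=m(F,\eps)$ such that, by the definition of $\gamma(F)$, every $m$-vertex hypergraph with minimum codegree at least $(\gamma(F)+\eps/2)m$ contains a copy of $F$. Given $H$ on $n\gg m$ vertices with $\delta_2(H)\geq(\gamma(F)+\eps)n$, I would sample a uniformly random $m$-subset $S\subseteq V(H)$ and, for each pair $uv\subseteq S$, observe that $d_{H[S]}(uv)$ is a hypergeometric random variable with mean at least $(\gamma(F)+\eps)(m-2)(1-o(1))$ as $n\to\infty$. A Hoeffding-type tail bound for hypergeometric distributions yields $\PP\bigl[d_{H[S]}(uv)<(\gamma(F)+\eps/2)m\bigr]\leq 2\exp(-c\eps^2 m)$, which for $m$ taken large enough beats a union bound over the $\binom{m}{2}$ pairs in $S$. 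Hence, with probability at least $1/2$, the sampled $S$ satisfies $\delta_2(H[S])\geq(\gamma(F)+\eps/2)m$, so $H[S]$ contains a copy of $F$. A standard double-count, using that every copy of $F$ lies in exactly $\binom{n-k}{m-k}$ subsets of size $m$, then yields at least $\delta n^k$ copies of $F$ in $H$ for some $\delta=\delta(F,\eps)>0$.

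For the blow-up identity, the inequality $\gamma(F)\leq\gamma(F(\ell))$ is immediate since $F$ embeds into $F(\ell)$ by taking one vertex from each part. For the reverse inequality, I would apply the supersaturation just established to any $H$ with $\delta_2(H)\geq(\gamma(F)+\eps)n$, obtaining $\delta n^k$ injective labeled embeddings of $F$ into $H$. Viewed as a $k$-ary relation $R\subseteq V(H)^k$ of size $\Omega(n^k)$ containing no tuple with repeated coordinates, the Erd\H{o}s box theorem (the $k$-partite $k$-uniform K\H{o}v\'ari--S\'os--Tur\'an) produces vertex sets $V_1,\dots,V_k\subseteq V(H)$ with $|V_i|=\ell$ and $V_1\times\dots\times V_k\subseteq R$, provided $n$ is large in terms of $\ell,k,\delta$. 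Since $R$ has no diagonal tuples, the $V_i$ are automatically pairwise disjoint, and every transversal embeds $F$ accordingly, yielding a copy of $F(\ell)$ in $H$. Hence $\gamma(F(\ell))\leq\gamma(F)+\eps$ for every $\eps>0$, and letting $\eps\to 0$ completes the proof.

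The main obstacle I foresee is coordinating the two limits $m\to\infty$ and $n\to\infty$ in the sampling step. One needs $m$ simultaneously large enough that $\ex_2(m,F)/m<\gamma(F)+\eps/2$ holds by definition of $\gamma(F)$ and large enough for the Hoeffding bound to beat the $\binom{m}{2}$-union bound, while also requiring $n\gg m$ so that the mean codegree of any pair in $H[S]$ is essentially $(\gamma(F)+\eps)(m-2)$ rather than noticeably less. All of these constraints can be arranged by choosing $m=m(F,\eps)$ first (using only $\gamma$ and the tail bound), then $n_0=n_0(m)$, after which $\delta=\delta(F,\eps)>0$ emerges naturally from the double-count.
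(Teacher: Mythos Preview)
The paper does not actually prove this proposition: it is quoted verbatim from Mubayi and Zhao~\cite{MZ} and closed with a \qed, so there is no in-paper argument to compare against. Your proof is correct and is essentially the argument one finds in~\cite{MZ}: random $m$-set sampling plus hypergeometric concentration for the supersaturation step, followed by the Erd\H{o}s $k$-partite box theorem to pass from many copies of $F$ to a copy of $F(\ell)$. One small remark: your claim that the sets $V_1,\dots,V_k$ produced by the box theorem are automatically pairwise disjoint is correct and worth spelling out explicitly --- if $v\in V_i\cap V_j$ with $i\neq j$, then any tuple in $V_1\times\cdots\times V_k$ placing $v$ in both coordinates $i$ and $j$ would lie in $R$ yet have a repeated entry, contradicting that $R$ records injective embeddings.
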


	\begin{proof}[Proof of Theorem~\ref{thm:cycles}]
		
		We begin by noting that it is enough to show that~$\gamma(C_5^-)\!=\!0$. Indeed, we shall prove by induction that~$\gamma(C_{\ell}^-)=0$ for every $\ell\geq 5$. For~$\ell=6$, the result follows since~$C_6^-$ is a subgraph of~$C_3(2)$. Hence, by Proposition~\ref{prop:blow-up}, we have~$\gamma(C_6^-)\leq\gamma(C_3(2))=\gamma(C_3)=0$. For~$\ell=7$, note that~$C_7^-$ is a subgraph of~$C_5^-(2)$. To see that, let~$v_1,\dots,v_5$ be the vertices of a~$C_5^-$ with edge set~$\{v_iv_{i+1}v_{i+2}:i\neq 4\}$, where the indices are taken modulo $5$. Now add one copy~$v_2'$ of~$v_2$ and one copy~$v_3'$ of~$v_3$.
		Then~$v_1v_3v_2v_4v_3'v_5v_2'$ is the cyclic ordering of a~$C_7^-$ with the missing edge being~$v_3'v_5v_2'$.
		Therefore, if~$\gamma(C_5^-)=0$, then, by Proposition~\ref{prop:blow-up}, we have~$\gamma(C_7^-)=0$. Finally, for~$\ell\geq 8$,~$\gamma(C_\ell^-)=0$ follows by induction using the same argument and observing that $C_\ell^-$ is a subgraph of $C_{\ell-3}^-(2)$.
		
		
		
		Given $\varepsilon\in (0,1)$, consider a hypergraph~$H=(V,E)$ on~$n\geq \big(\frac{2}{\varepsilon}\big)^{5/\varepsilon^2+2}$ vertices with~$\delta_2(H)\geq\varepsilon n$. 
		We claim that $H$ contains a copy of a $C_5^-$.
		
		Given~$v,b\in V$,~$S\subseteq V$, and~$P\subseteq (V\setminus S)^{2}$, we say that~$(v,S,b,P)$ is a \textit{nice picture} if it satisfies the following:
		\begin{enumerate}
			\item[(i)] $S\subseteq N_{L_v}(b)$, where~$N_{L_v}(b)$ is the neighborhood of~$b$ in the link~$L_v$.
			\item[(ii)] For every vertex~$u\in S$ and ordered pair~$(x,y)\in P$, the sequence~$ubxy$ is a path of length~$3$ in~$L_v$.
		\end{enumerate}
		Note that if~$(v,S,b,P)$ is a nice picture and there exists~$u\in S$ and~$(x,y)\in P$ such that~$uxy\in E$, then~$ubvxy$ is a copy of~$C_5^-$ (with the missing edge being~$yub$)
		
		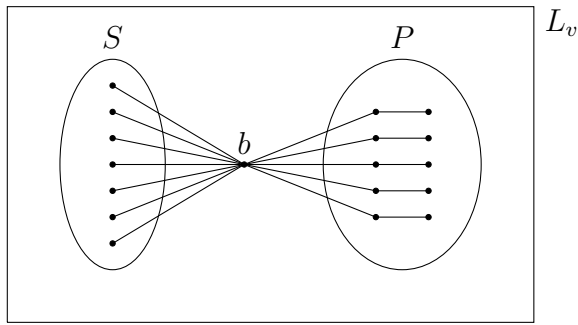
\begin{figure}[h]
			\centering
			{\hfil \begin{tikzpicture}[scale=0.7]
					
					\draw (2,3) ellipse (1 and 2);
					\draw (7.5,3) ellipse (1.5 and 2);

					\coordinate (P) at (0,0);
					\coordinate (Q) at (10,0);
					\coordinate (R) at (10,6);
					\coordinate (S) at (0,6);
					
					\coordinate (b) at (4.5,3);
					\coordinate (z1) at (2,4.5);
					\coordinate (z2) at (2,4);
					\coordinate (z3) at (2,3.5);
					\coordinate (z4) at (2,3);
					\coordinate (z5) at (2,2.5);
					\coordinate (z6) at (2,2);
					\coordinate (z7) at (2,1.5);
					\coordinate (u) at (2,5);
					\coordinate (d) at (2,1);
					
					\coordinate (x1) at (7,4);
					\coordinate (x2) at (7,3.5);
					\coordinate (x3) at (7,3);
					\coordinate (x4) at (7,2.5);
					\coordinate (x5) at (7,2);
					\coordinate (y1) at (8,4);
					\coordinate (y2) at (8,3.5);
					\coordinate (y3) at (8,3);
					\coordinate (y4) at (8,2.5);
					\coordinate (y5) at (8,2);
					
					\draw[fill] (b) circle [radius=0.05];
					\draw[fill] (z1) circle [radius=0.05];
					\draw[fill] (z2) circle [radius=0.05];
					\draw[fill] (z3) circle [radius=0.05];
					\draw[fill] (z4) circle [radius=0.05];
					\draw[fill] (z5) circle [radius=0.05];
					\draw[fill] (z6) circle [radius=0.05];
					\draw[fill] (z7) circle [radius=0.05];
					\draw[fill] (x1) circle [radius=0.05];
					\draw[fill] (x2) circle [radius=0.05];
					\draw[fill] (x3) circle [radius=0.05];
					\draw[fill] (x4) circle [radius=0.05];
					\draw[fill] (x5) circle [radius=0.05];
					\draw[fill] (y1) circle [radius=0.05];
					\draw[fill] (y2) circle [radius=0.05];
					\draw[fill] (y3) circle [radius=0.05];
					\draw[fill] (y4) circle [radius=0.05];
					\draw[fill] (y5) circle [radius=0.05];

					\draw (P)--(Q)--(R)--(S)--(P);
					\draw (b)--(z1);
					\draw (b)--(z2);
					\draw (b)--(z3);
					\draw (b)--(z4);
					\draw (b)--(z5);
					\draw (b)--(z6);
					\draw (b)--(z7);
					\draw (b)--(x1);
					\draw (b)--(x2);
					\draw (b)--(x3);
					\draw (b)--(x4);
					\draw (b)--(x5);
					\draw (x1)--(y1);
					\draw (x2)--(y2);
					\draw (x3)--(y3);
					\draw (x4)--(y4);
					\draw (x5)--(y5);
					
					\node (b) at (b) [above] {$b$};
					\node (s) at (2,5) [above] {$S$};
					\node (P) at (7.5,5) [above] {$P$};
					\node (L) at (10,5.7) [right] {$L_v$};

				\end{tikzpicture}\hfil}
			\caption{A nice picture $(v,S,b,P)$}
		\end{figure}
		
		To find such a copy of~$C_5^-$ in~$H$, we are going to construct a sequence of nested sets~$S_t\subseteq S_{t-1}\subseteq \ldots \subseteq S_0$, where~$t=5/\varepsilon^2+1$, and nice pictures~$(v_i,S_i,b_i,P_i)$ satisfying~$v_i\in S_{i-1}$,~$|S_i|\geq \big(\frac{\eps}{2}\big)^{i+1}n\geq 1$ and~$|P_i|\geq \varepsilon^2 n^2/5$ for~$1\leq i \leq t$. 
		Suppose that such a sequence exists. 
		Then by the pigeonhole principle, there exist two indices~$i,j \in [t]$ such that~$P_i\cap P_j\neq \emptyset$ and~$i<j$. Let~$(x,y)$ be an element of~$P_i\cap P_j$. Hence, we obtain a nice picture~$(v_i,S_i,b_i,P_i)$,~$v_j\in S_i$ and~$(x,y)\in P_i$ such that~$v_jxy \in E$ (since~$xy$ is an edge in~$L_{v_j}$). Consequently,~$v_jb_iv_ixy$ is a copy of~$C_5^-$ in~$H$. 
		
		It remains to prove that the sequence described above always exists. 
		We construct it recursively. Let $S_0\subseteq V$ be an arbitrary subset of size $\eps n/2$. Suppose that we already constructed nice pictures $(v_i,S_i,b_i,P_i)$ for $1\leq i< k\leq t$ and now we want to construct $(v_k,S_k,b_k,P_k)$. Pick $v_k\in S_{k-1}$ arbitrarily. The minimum codegree of~$H$ implies that~$\delta(L_{v_k})\geq\varepsilon n$ and thus for every~$u\in S_{k-1}$, we have that~$d_{L_{v_k}}(u)\geq\varepsilon n$. Observe that
		\begin{align*}
			\sum_{b \in V\setminus v_k} \vert N_{L_{v_k}}( b)\cap S_{k-1}\vert 
			= \sum_{u \in S_{k-1}\setminus v_k} d_{L_{v_k}}(u)
			\geq \eps n \big(\vert S_{k-1}\vert-1\big) 
			\geq \Big(\frac{\varepsilon}{2}\Big)^{k+1}n^2
		\end{align*}
		and therefore, by an averaging argument there is a vertex~$b_k\in V\setminus v_k$ such that the subset~$S_k:=N_{L_{v_k}}(b_k)\cap S_{k-1}\subseteq S_{k-1}$ is of size at least~$|S_k|\geq \big(\frac{\eps}{2}\big)^{k+1} n$.
		Let~$P_k$ be all the pairs~$(x,y) \in (V\setminus S_k)^2$ such that for every vertex~$v\in S_k$, the sequence~$v,b_k,x,y$ forms a path of length~$3$ in~$L_{v_k}$.
		Since~$|S_k|\leq \eps n/2$ and~$\delta(L_{v_k})\geq\varepsilon n$, it is easy to see that~$|P_k|\geq \varepsilon^2n^2/5$.
		That is to say~$(v_k, S_k, b_k, P_k)$ is a nice picture satisfying the desired conditions.
	\end{proof}

	\section{Concluding remarks}
	
	A famous result by Erd\H{o}s \cite{E:64} asserts that a hypergraph $F$ satisfies $\pi(F)=0$ if $F$ is tripartite (i.e., $V(F)=X_1\dot\cup X_2 \dot\cup X_3$ and for every~$e\in E(F)$ we have~$\vert e\cap X_i\vert = 1$ for every~$i\in[3]$).
	Note that if~$H$ is tripartite, then every subgraph of $H$ is tripartite as well and there are tripartite hypergraphs $H$ with~$\vert E(H) \vert =\tfrac{2}{9} \binom{\vert V(H)\vert }{3}$. 
	Therefore, if $F$ is not tripartite, then~$\pi(F)\geq 2/9$.
	In other words, Erd\H{o}s' result implies that there are no Tur\'an densities in the interval~$(0,2/9)$.
	It would be interesting to understand the behaviour of the codegree Tur\'an density in the range close to zero.
	
	\begin{question}
		Is it true that for every~$\xi\in (0,1]$, there exists a~hypergraph~$F$ such that \vspace{-0.3cm}
		\[
		0<\gamma(F)\leq \xi\text{ ?}
		\]
	\end{question}
	
	Mubayi and Zhao \cite{MZ} answered this question affirmatively if we consider the codegree Tur\'an density of a family of hypergraphs instead of a single hypergraph. 
	
	Since $C_5^{-}$ is not tripartite, we have that~$\pi(C_5^{-})\geq 2/9$. 
	The following construction attributed to~Mubayi and R\"odl (see e.g.~\cites{Balogh}) provides a better lower bound. 
	Let~$H=(V,E)$ be a~$C_5^{-}$-free hypergraph on~$n$ vertices.
	Define a hypergraph~$\widetilde H$ on~$3n$ vertices with~$V(\widetilde H)= V_1\dot\cup V_2\dot\cup V_3$ such that~$\widetilde H[V_i]=H$ for every~$i\in [3]$ plus all edges of the form~$e=\{v_1,v_2,v_3\}$ with $v_i\in V_i$.
	Then, it is easy to check that $\widetilde H$ is also~$C_5^{-}$-free. 
	We may recursively repeat this construction starting with~$H$ being a single edge and obtain an arbitrarily large~$C_5^{-}$-free hypergraph with density~$1/4-o(1)$. 
	In fact, those hypergraphs are~$C_\ell^-$-free for every~$\ell$ not divisible by three.
	The following is a generalisation of a conjecture in~\cite{MPS:11}.
	
	
	\begin{conj} If~$\ell\geq 5$ is not divisible by three, then 
		$\pi(C_\ell^{-}) = \frac{1}{4}\,.$
	\end{conj}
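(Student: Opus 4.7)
The lower bound $\pi(C_\ell^-)\geq 1/4-o(1)$ is already established by the Mubayi--R\"odl construction recalled just above, so the content of the conjecture is the matching upper bound $\pi(C_\ell^-)\leq 1/4$ when $3\nmid \ell$. The plan is a stability approach layered on top of Theorem~\ref{thm:cycles}.

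First, assume toward a contradiction that there is a~$C_\ell^{-}$-free hypergraph~$H$ on~$n$ vertices with $|E(H)|\geq (1/4+\eps)\binom{n}{3}$. Apply the hypergraph regularity lemma of Frankl--R\"odl / R\"odl--Schacht together with its counting lemma. This produces a bounded-size cluster partition, a partition of (almost) all of~$V(H)^{(3)}$ into regular triads, and a \emph{reduced hypergraph}~$\cR$ whose edges correspond to the dense, quasi-random triads of~$H$.

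Second, I would argue that inside any dense regular triad the relative density cannot essentially exceed~$1/4$; otherwise a linear-sized collection of pairs inside the triad has linear codegree and Theorem~\ref{thm:cycles} together with Proposition~\ref{prop:blow-up} produces a copy of~$C_\ell^-$. Combined with a standard averaging over triads, this forces $e(H)\leq (1/4+o(1))\binom{n}{3}$ unless the ``dense part'' concentrates on an almost-canonical family of triads making~$\cR$ itself a dense~$C_\ell^-$-free object. The divisibility hypothesis enters at this structural level: the only reduced hypergraphs of density~$\geq 1/4-o(1)$ avoiding~$C_\ell^-$ with~$3\nmid \ell$ should be the iterated Mubayi--R\"odl blow-ups, which admit a canonical tripartition meeting each edge in exactly one vertex per part and hence have density at most~$1/4$. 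A tight cycle whose length is a multiple of~$3$ is itself tripartite, so it does \emph{not} impose the colour constraint---this is precisely why the divisibility condition is indispensable.

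The main obstacle is the second step. Codegree density zero (Theorem~\ref{thm:cycles}) does not directly translate into an edge-density bound inside a dense quasi-random triad, since such a triad may still have a positive proportion of pairs of sublinear codegree---this is exactly why the gap $\gamma(C_\ell^-)=0\ll 1/4 \leq \pi(C_\ell^-)$ is possible. What is really required is a \emph{stability} form of Theorem~\ref{thm:cycles}: any~$C_\ell^-$-free hypergraph of edge density exceeding~$1/4+\eps$ must be structurally close to an iterated Mubayi--R\"odl blow-up. Producing such stability for tight cycles minus an edge seems to demand new ideas beyond the link-graph path-counting (the ``nice picture'' technique) of Section~\ref{thm:cycles}'s proof, and is presumably what currently blocks all known approaches, including flag algebras, which only reach~$\gamma(C_\ell^-)\leq 0.136$ and are much weaker for~$\pi$.
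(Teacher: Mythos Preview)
The statement you are addressing is labelled a \emph{conjecture} in the paper, not a theorem; the paper offers no proof of it, so there is nothing to compare your attempt against. Your write-up is itself not a proof but a strategy outline in which you already locate the decisive gap: Theorem~\ref{thm:cycles} asserts only that $\gamma(C_\ell^-)=0$, and this gives no control on the edge density of $C_\ell^-$-free hypergraphs. In particular, your ``second step'' claim---that density exceeding $1/4$ in a regular triad forces a linear-sized family of pairs with linear codegree, to which Theorem~\ref{thm:cycles} then applies---is false as stated: a minimum-codegree hypothesis is far stronger than an average-density one, and the iterated Mubayi--R\"odl construction already achieves density $1/4-o(1)$ while remaining $C_\ell^-$-free, so no codegree argument by itself can push the density strictly below $1/4$.

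There is also a small confusion in your final paragraph: the $0.136$ bound from~\cite{Balogh} is for $\gamma$, and it is \emph{superseded} by Theorem~\ref{thm:cycles} (which gives $\gamma=0$); it is not a standing obstruction for $\pi$. The genuine missing ingredient, as you correctly diagnose, would be a stability theorem for $C_\ell^-$ at density $1/4$ characterising near-extremal hypergraphs as approximate iterated blow-ups, and nothing in the paper supplies one. The conjecture remains open.
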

	
	\begin{bibdiv}
		\begin{biblist}
			
			\bib{Balogh}{article}{
				author={Balogh, J{\'o}zsef},
				author={Clemen, Felix Christian},
				author={Lidick{\`y}, Bernard},
				title={Hypergraph Tur\'an Problems in $\ell_2$-Norm},
				journal={arXiv preprint arXiv:2108.10406},
				year={2021},
			}
			
			\bib{CN:01}{article}{
				author={A. Czygrinow} ,
				author={B. Nagle},
				title={ A note on codegree problems for hypergraphs},
				journal={Bulletin of the Institute of Combinatorics and its Applications},
				year={2001},
				pages={63 - 69},
			}
			
			\bib{E:64}{article}{
				AUTHOR = {Erd\H{o}s, P.},
				TITLE = {On extremal problems of graphs and generalized graphs},
				JOURNAL = {Israel J. Math.},
				FJOURNAL = {Israel Journal of Mathematics},
				VOLUME = {2},
				YEAR = {1964},
				PAGES = {183--190},
				ISSN = {0021-2172},
				MRCLASS = {05.40},
				MRNUMBER = {183654},
				MRREVIEWER = {A. H. Stone},
				DOI = {10.1007/BF02759942},
				URL = {https://doi.org/10.1007/BF02759942},
			}
			
			\bib{E:77}{article}{
				author={Erd\H{o}s, Paul},
				title={Paul Tur\'{a}n, 1910--1976: his work in graph theory},
				journal={J. Graph Theory},
				volume={1},
				date={1977},
				number={2},
				pages={97--101},
				issn={0364-9024},
				review={\MR{441657}},
				doi={10.1002/jgt.3190010204},
			}
			
			\bib{codegK4-}{article}{
				title={The codegree threshold of K4-},
				author={Falgas-Ravry, Victor},
				author={Pikhurko, Oleg},
				author={Vaughan, Emil}, 
				author={Volec, Jan},
				journal={Electronic Notes in Discrete Mathematics},
				volume={61},
				pages={407--413},
				year={2017},
				publisher={Elsevier}
			}
			
			
			\bib{GKV:16}{article}{
				author={Glebov, Roman},
				author={Kr\'{a}l', Daniel},
				author={Volec, Jan},
				title={A problem of Erd\H{o}s and S\'{o}s on 3-graphs},
				journal={Israel J. Math.},
				volume={211},
				date={2016},
				number={1},
				pages={349--366},
				issn={0021-2172},
				review={\MR{3474967}},
				doi={10.1007/s11856-015-1267-4},
			}
			
			\bib{K:11}{article}{
				author={Keevash, Peter},
				title={Hypergraph Tur\'{a}n problems},
				conference={
					title={Surveys in combinatorics 2011},
				},
				book={
					series={London Math. Soc. Lecture Note Ser.},
					volume={392},
					publisher={Cambridge Univ. Press, Cambridge},
				},
				date={2011},
				pages={83--139},
				review={\MR{2866732}},
			}
			
			\bib{M:05}{article}{
				author={Mubayi, Dhruv},
				title={The co-degree density of the Fano plane},
				journal={J. Combin. Theory Ser. B},
				volume={95},
				date={2005},
				number={2},
				pages={333--337},
				issn={0095-8956},
				review={\MR{2171370}},
				doi={10.1016/j.jctb.2005.06.001},
			}
			
			\bib{MPS:11}{article}{
				title={Hypergraph Tur\'an Problem: Some Open Questions},
				author={Mubayi, Dhruv},
				author={Pikhurko, Oleg},
				author={Benny Sudakov},
				year={2011},
				journal={https://homepages.warwick.ac.uk/$\sim$maskat/Papers/TuranQuestions.pdf},
			}
			
			\bib{MZ}{article}{
				title={Co-degree density of hypergraphs},
				author={Mubayi, Dhruv},
				author={Zhao, Yi},
				journal={Journal of Combinatorial Theory, Series A},
				volume={114},
				number={6},
				pages={1118--1132},
				year={2007},
				publisher={Elsevier}
			}
			
			
			\bib{RRS:18}{article}{
				author={Reiher, {Chr}istian},
				author={R\"{o}dl, Vojt\v{e}ch},
				author={Schacht, Mathias},
				title={On a Tur\'{a}n problem in weakly quasirandom 3-uniform hypergraphs},
				journal={J. Eur. Math. Soc. (JEMS)},
				volume={20},
				date={2018},
				number={5},
				pages={1139--1159},
				issn={1435-9855},
				review={\MR{3790065}},
				doi={10.4171/JEMS/784},
			}
			
			\bib{T:41}{article}{
				author={Tur\'{a}n, Paul},
				title={Eine Extremalaufgabe aus der Graphentheorie},
				language={Hungarian, with German summary},
				journal={Mat. Fiz. Lapok},
				volume={48},
				date={1941},
				pages={436--452},
				issn={0302-7317},
				review={\MR{18405}},
			}

		\end{biblist}
	\end{bibdiv}	
\end{document}